%
%
\documentclass[12pt,twoside]{amsart}

\usepackage{amssymb}
\usepackage{graphicx}

\usepackage[margin=1in]{geometry}
\hyphenation{semi-stable}


\newcommand{\NN}{\mathbb{N}}
\newcommand{\PP}{\mathbb{P}}
\newcommand{\ZZ}{\mathbb{Z}}

\newcommand{\oI}{\overline{I}}
\newcommand{\oIt}{\overline{I_t}}


\DeclareMathOperator{\codim}{codim}
\DeclareMathOperator{\charf}{char}
\def\floor#1{\left\lfloor #1 \right\rfloor}

\newcommand{\triplestack}[3]{\begin{array}{c} #1 \\ #2 \\ #3 \end{array}}


\newcommand{\st}{\; | \;}
\numberwithin{figure}{section}
\numberwithin{equation}{section}

\newtheorem{theorem}[equation]{Theorem}
\newtheorem{lemma}[equation]{Lemma}
\newtheorem{proposition}[equation]{Proposition}
\newtheorem{corollary}[equation]{Corollary}
\newtheorem{conjecture}[equation]{Conjecture}

\theoremstyle{definition}

\newtheorem{remark}[equation]{Remark}
\newtheorem{example}[equation]{Example}

\def\today{\number\day.\space
           \ifcase\month\or January\or February\or March\or April\or May\or June\or July\or August\or September\or October\or November\or December\fi
           \space\number\year}

%
%
\begin{document}

\title{Hyperplane sections and the subtlety of the Lefschetz properties}
\author[D.\ Cook II, U.\ Nagel]{David Cook II, Uwe Nagel}
\address{Department of Mathematics, University of Kentucky, 715 Patterson Office Tower, Lexington, KY 40506-0027, USA}
\email{dcook@ms.uky.edu, uwe.nagel@uky.edu}
\thanks{Part of the work for this paper was done while the authors were partially supported by the National Security Agency under Grant Number H98230-09-1-0032.}
\subjclass[2010]{13C40, 14M10, 14N99}
\keywords{Monomial ideal, lifting, Lefschetz properties, hyperplane section}

\begin{abstract}
    The weak and strong Lefschetz properties are two basic properties that Artinian algebras may have.  Both Lefschetz
    properties may vary under small perturbations or changes of the characteristic.   We study these subtleties by proposing
    a systematic way of deforming a monomial ideal failing the weak Lefschetz property to an ideal with the same Hilbert 
    function and  the weak Lefschetz property.  In particular, we lift a family of Artinian monomial ideals to  finite
    level sets of points in projective space with the property that a general hyperplane section has the weak Lefschetz 
    property in almost all characteristics, whereas a special hyperplane section does not have this property in any characteristic.
\end{abstract}

\maketitle

\section{Introduction} \label{sec:introduction}

Let $K$ be an infinite field and $I$ be a homogeneous Artinian ideal in $R = K[x_1, \ldots, x_n]$.  The ring $A = R/I$ is  {\em level of
type} $t$ if its socle has dimension $t$ and is concentrated in one degree. The algebra $A$ is said to have the {\em weak Lefschetz
property} if there is a linear form $\ell \in A$, called a {\em weak Lefschetz element} of $A$, such that for all integers $d$, the
multiplication map $\times \ell: [A]_d \rightarrow [A]_{d+1}$ has maximal rank, that is, it is surjective or injective. Further, $A$
is said to have the {\em strong Lefschetz property} if there is a linear form $\ell \in A$, called a {\em strong Lefschetz element} of
$A$, such that for all positive $k$ and all integers $d$ the multiplication map $\times\ell^k: [A]_d \rightarrow [A]_{d+k}$ has
maximal rank. Clearly the strong Lefschetz property implies the weak Lefschetz property.

Both Lefschetz properties have been studied extensively, especially for the constraints on the Hilbert function (see, e.g.,
\cite{BMMNZ}, \cite{HMNW}, \cite{MZ}, and~\cite{ZZ}).  For example, algebras with the weak Lefschetz property have strictly unimodal
Hilbert functions~\cite[Remark~3.3(4)]{HMNW}. Despite their utility---\cite{St} is a well known example---much is still unknown
about the presence of the Lefschetz properties, even in seemingly simple cases (see, e.g., \cite{BK-2010} and~\cite{MMN}).

The weak Lefschetz property may subtly vary under changes of the base field characteristic and  under deformation (see, e.g.,
\cite{MMN}). Even in the case of a monomial complete intersection in three variables it is not completely known in which positive
characteristics the weak Lefschetz property is present though  very interesting partial results have been established in~\cite{LZ}
and~\cite{BK-2010}. On the other hand, in~\cite[Section~5]{MMN} it was shown by example that a small {\em ad hoc} perturbation---preserving
the Hilbert function---of a monomial ideal without the weak Lefschetz  property may result in an ideal having the weak Lefschetz
property for almost every field characteristic.

Herein we propose a systematic way of deforming a monomial ideal without the weak Lefschetz  property to an ideal with the weak
Lefschetz property (in almost every characteristic) and the same Hilbert function as the original ideal. This could potentially be
useful, for example, if one expects an ideal to have a unimodal Hilbert function. Indeed, showing that the deformed ideal has the
weak Lefschetz property would then imply the desired unimodality.

The basic idea is to lift the monomial ideal to a finite set of points. We then expect the general hyperplane section of this set of
points to have the Lefschetz properties. We test this idea in the case of level monomial ideals in three variables of low type  that
do not have the weak Lefschetz property. If the type is one, then such an ideal is a complete intersection, so it has the weak
Lefschetz property in characteristic zero. The latter is also true if the type is two by \cite[Theorem~7.17]{BMMNZ}. Thus, we focus on
a family of almost complete intersections of type three that do not have the weak Lefschetz property. Lifting such an ideal to a finite
set of points we get a level set of points in three-space of type three. We show that the general hyperplane section of this set has
the weak Lefschetz property in almost every characteristic, whereas a special hyperplane section never has the weak Lefschetz property
(see Corollary \ref{cor:general-has-WLP}). Notice that examples of level sets of points in $\PP^3$ of type three such that {\em every}
Artinian reduction fails the weak Lefschetz property have been constructed in \cite[Section~3]{Mi}.

This note is organised as follows. In Section~\ref{sec:liftings} we recall the method of lifting an Artinian monomial ideal to a set of
points and we introduce the  family of Artinian monomial ideals that we focus on. In Section~\ref{sec:wlp},  we use this family to
explore the subtlety of the weak Lefschetz property under various hyperplane sections and in arbitrary characteristic.
Finally, in Section~\ref{sec:slp} we comment on the strong Lefschetz property in characteristic zero.

\section{Liftings and hyperplanes sections} \label{sec:liftings}

Let $R = K[x_0, \ldots, x_n]$ and $S = K[x_1, \ldots, x_n]$ be standard graded polynomial rings over an infinite field $K$.  Let $J
\subset S$ be a homogeneous ideal and $I \subset R$ be a homogeneous radical ideal.  Then we say that $J$ {\em lifts} to $I$ if $x_0$ is
a non-zero-divisor of $R/I$ and $(I,x_0)/(x_0) \cong J$. If such an $I$ exists, then $J$ is called a {\em liftable} ideal.

If $\alpha = (a_1, \ldots, a_n) \in \NN_0^n$, then define $x^\alpha := x_1^{a_1} \cdots x_n^{a_n}$; the degree of $x^\alpha$ is
$|\alpha| = \sum_{i=1}^n{a_i}$. For each $1 \leq j \leq n$, choose an infinite set $\{t_{j0}, t_{j1}, \ldots\} \subset K$ of distinct
elements.  Then to $\alpha \in \NN_0^n$ we associate the point $\overline{\alpha} := [1 : t_{1a_1} : \cdots : t_{na_n} ] \in
\PP^n_K$ and to $x^\alpha$ we associate the homogeneous polynomial 
\[
    \overline{x^\alpha} := \prod_{j=1}^n { \prod_{i=0}^{a_j-1} {(x_j - t_{ji}x_0)} } \in R.
\]

Using this construction, it was shown in~\cite[Theorem~4.9]{Ha} and~\cite[Theorem~2.2]{GGR} that monomial ideals are liftable given
that the field $K$ is infinite.

\begin{theorem} \label{thm:monomial-liftable}
    Let $I \subset S$ be a monomial ideal with minimal generators $\{x^{\alpha_1}, \ldots, x^{\alpha_m}\}$ and assume $K$ is infinite.
    Then $I$ lifts to the ideal $\oI := (\overline{x^{\alpha_1}}, \ldots, \overline{x^{\alpha_m}}) \subset R.$
\end{theorem}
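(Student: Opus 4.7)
The plan is to verify the three defining conditions: $\overline{I}$ is radical, $x_0$ is a non-zero-divisor on $R/\overline{I}$, and $(\overline{I}, x_0)/(x_0) \cong I$. The last condition is essentially built into the construction of $\overline{x^\alpha}$: specialising $x_0 \mapsto 0$ replaces each linear factor $x_j - t_{ji}x_0$ by $x_j$, so $\overline{x^{\alpha_j}}$ reduces to $x^{\alpha_j}$ modulo $x_0$. The induced surjection $R/(\overline{I}, x_0) \twoheadrightarrow S/I$ is injective because every coset has a representative in $S$ whose image encodes a genuine monomial relation.

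For the other two conditions, the strategy I would pursue is to establish the Hilbert-function identity
\[
    H_{R/\overline{I}}(d) = \sum_{k \le d} H_{S/I}(k).
\]
From this the non-zero-divisor property of $x_0$ follows at once: since $H_{R/(\overline{I},x_0)}(d) = H_{S/I}(d)$ by condition~(iii), the sequence $0 \to [R/\overline{I}]_{d-1} \xrightarrow{\;\cdot x_0\;} [R/\overline{I}]_d \to [R/(\overline{I},x_0)]_d \to 0$ is forced to be exact on the left precisely when the identity holds in degree $d$. The identity itself I would prove by induction, using the colon exact sequence
\[
    0 \longrightarrow \bigl(R/(\overline{J} : \overline{x^{\alpha_m}})\bigr)(-|\alpha_m|) \xrightarrow{\;\cdot\,\overline{x^{\alpha_m}}\;} R/\overline{J} \longrightarrow R/\overline{I} \longrightarrow 0,
\]
where $\overline{J} = (\overline{x^{\alpha_1}}, \ldots, \overline{x^{\alpha_{m-1}}})$, together with its monomial analogue in $S$. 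Radicality of $\overline{I}$ can then be read off from the observation that, in the Artinian case, the reduced set of points $\{\overline{\alpha} : x^\alpha \notin I\}$ has cardinality $\dim_K S/I$ and is contained in the vanishing locus of $\overline{I}$, so the inequality $\dim_K R/\overline{I} \le \dim_K S/I$ forced by the Hilbert-function identity must be an equality realised by $\overline{I} = I(\{\overline{\alpha} : x^\alpha \notin I\})$.

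The principal obstacle is the inductive step, which requires identifying the lifted colon $\overline{J} : \overline{x^{\alpha_m}}$ with the lift of the monomial colon $(x^{\alpha_1}, \ldots, x^{\alpha_{m-1}}) : x^{\alpha_m}$. The generators of the monomial colon are combinatorially explicit, but the corresponding lifted elements agree with generators of $\overline{J} : \overline{x^{\alpha_m}}$ only modulo correction terms in $\overline{J}$. Controlling these correction terms is precisely where the distinctness of the parameters $t_{ji}$ is used---the non-vanishing of the differences $t_{ji} - t_{jk}$ ensures that the linear factors of distinct $\overline{x^{\alpha_i}}$ meet transversally---and this combinatorial bookkeeping forms the technical heart of the arguments in~\cite{Ha} and~\cite{GGR}.
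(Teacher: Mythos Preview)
The paper does not give its own proof of this theorem; it is stated with attribution to \cite[Theorem~4.9]{Ha} and \cite[Theorem~2.2]{GGR}, and no argument is supplied beyond the citation. Your sketch therefore goes well beyond what the paper does, and in fact outlines essentially the strategy of those references (and of the companion paper \cite{MN}): check the specialisation $x_0\mapsto 0$ directly, establish the summed Hilbert-function identity, and deduce the non-zero-divisor and radical properties from it, with the real work concentrated in tracking colon ideals through the lifting.

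Two small caveats on your sketch. First, your radicality paragraph is restricted to the Artinian case, whereas the theorem is stated for arbitrary monomial ideals; the cited proofs handle this either by passing to an Artinian ideal containing $I$ or by a direct primary-decomposition argument, so you should indicate how you reduce. Second, the inequality you write as ``$\dim_K R/\overline{I}\le \dim_K S/I$'' is not literally correct, since $R/\overline{I}$ is one-dimensional and hence infinite-dimensional over $K$; what you mean is that the degree (multiplicity) of $R/\overline{I}$, i.e.\ the eventual constant value of $H_{R/\overline{I}}$, equals $\dim_K S/I$, which then forces $\overline{I}$ to coincide with the radical ideal of the $\dim_K S/I$ points $\{\overline{\alpha}:x^\alpha\notin I\}$.
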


Thus, given an Artinian monomial ideal $I \subset S$, the lifted ideal $\oI$ is a saturated ideal of a reduced set of points.  Moreover,
using~\cite[Proposition~2.6]{MN} we get that $\oI$ is level if and only if $I$ is level.  Hence, starting with a level Artinian monomial
ideal, the action of lifting yields a (Krull) dimension one saturated ideal of a reduced level set of points.

Let $Z$ be a subscheme of $\PP^n_K$ and let $H$ be a hyperplane (i.e., $\codim{H}=1$) in $\PP^n_K$.  Then $Z \cap H$ is a {\em hyperplane section}
of $Z$.  Given a linear form $h \in R$, we abuse notation and call $(\oI, h)$ a {\em hyperplane section} of $\oI$.  When $I$ is Artinian,
then $(\oI,h)$ is an Artinian reduction of $\oI$ if and only if $\dim{R/(\oI,h)} = 0$.  Specifically, $R/(\oI,x_0) \cong S/I$
is an Artinian reduction of $\oI$, hence all Artinian reductions of $\oI$ have the same Hilbert function as $I$.

\subsection*{A family of almost complete intersections}
Let $S = K[x,y,z]$ be the standard graded polynomial ring in three variables over an infinite field $K$.  Then for $t \geq 1$, define $I_t$ to be the ideal
\[ I_t := (x^{t+1}, y^{t+1}, z^{t+1}, xyz) \subset S. \]

\begin{proposition} \label{pro:properties-of-It}
    For $t \geq 1$, the ideal $I_t \subset S$ defined above has the following properties:
    \begin{enumerate}
        \item $S/I_t$ is level and Artinian,
        \item The minimal free resolution of $S/I_t$ has the form
            \[
                0 \longrightarrow S^3(-3-2t) \longrightarrow \triplestack{S^3(-3-t)}{\oplus}{S^3(-2-2t)} \longrightarrow
                \triplestack{S(-3)}{\oplus}{S^3(-1-t)} \longrightarrow S \longrightarrow S/I_t \longrightarrow 0,
            \]
            in particular, $S/I_t$ has socle type $3$, and
        \item The Hilbert function of $S/I_t$ is given by
            \[
                h_{S/I_t}(d) = \left\{ \begin{array}{ll}
                    1         & \mbox{if } d = 0; \\
                    3d        & \mbox{if } 1 \leq d \leq t; \\
                    3(2t+1-d) & \mbox{if } t < d \leq 2t; \\
                    0         & \mbox{if } t > 2t. \end{array}
                \right.
            \]
    \end{enumerate}
\end{proposition}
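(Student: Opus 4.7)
The Artinian half of (i) is immediate: since $x^{t+1}, y^{t+1}, z^{t+1} \in I_t$, the algebra $S/I_t$ has Krull dimension zero. So the real content is the minimal free resolution (ii), from which the level property and the Hilbert function will both follow.

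My plan is to obtain the resolution via a mapping cone. Writing $J = (x^{t+1}, y^{t+1}, z^{t+1})$, the standard monomial colon rule gives $(J : xyz) = (x^t, y^t, z^t)$, producing the short exact sequence
\[
0 \to S/(x^t, y^t, z^t)(-3) \xrightarrow{\;\cdot\, xyz\;} S/J \to S/I_t \to 0.
\]
Both outer terms are complete intersections, so I would use their Koszul resolutions, with generator degrees $0, t+1, 2t+2, 3t+3$ for $S/J$ and $3, t+3, 2t+3, 3t+3$ for the twisted complex. I would then write down the chain map $\varphi$ lifting multiplication by $xyz$: its degree-one and degree-two components are diagonal, given by multiplication by the complementary monomials $yz, xz, xy$ and $x, y, z$, while a direct Koszul calculation shows that the top component $\varphi_3\colon S(-3t-3) \to S(-3t-3)$ is the identity. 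The mapping cone is therefore a free resolution of $S/I_t$ in which exactly one pair of $S(-3t-3)$ summands cancels.

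The main technical step is verifying minimality of the reduced cone. I would do this by listing the generator degrees in the four surviving positions---namely $\{0\}$, $\{3, t+1\}$, $\{t+3, 2t+2\}$, $\{2t+3\}$---and checking that, for every $t \ge 1$, no value is shared between adjacent positions; this rules out any remaining unit entries in the differentials. Claim (ii) follows at once, and the shape of the last free module $S^3(-3-2t)$ then forces the socle of $S/I_t$ to be concentrated in degree $2t$ with dimension $3$, completing (i).

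For (iii), the short exact sequence delivers the Hilbert series for free:
\[
HS(S/I_t; z) = \frac{(1-z^{t+1})^3 - z^3(1-z^t)^3}{(1-z)^3}.
\]
Expanding and splitting into the ranges $d \le t$, $t < d \le 2t$, and $d > 2t$ yields the stated piecewise formula. Alternatively one can argue combinatorially: a $K$-basis of $S/I_t$ consists of the monomials $x^a y^b z^c$ with $a, b, c \le t$ and $\min(a, b, c) = 0$, and a short inclusion--exclusion on which variable is zero gives the same count.
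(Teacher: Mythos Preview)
Your argument is correct and considerably more explicit than the paper's own treatment: the paper simply invokes \cite[Proposition~6.1]{MMN} for parts (i) and (ii) and then reads (iii) off from the resolution, giving no independent construction.

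The mapping-cone route you take is the standard way to resolve an ideal of the shape $J+(f)$ with $J$ a complete intersection, and it is almost certainly what lies behind the cited result in \cite{MMN} (which handles a broader family of monomial almost complete intersections). The specific computations you outline---the colon ideal $(J:xyz)=(x^t,y^t,z^t)$, the diagonal lifts $\varphi_1=\mathrm{diag}(yz,xz,xy)$, $\varphi_2=\mathrm{diag}(z,y,x)$, and the identification $\varphi_3=1$---are all correct, and your degree check is a clean way to certify minimality once the redundant pair of $S(-3t-3)$ summands has been cancelled: since the resolution is graded, a unit entry could only occur between summands of equal twist, and your list $\{0\}$, $\{3,t+1\}$, $\{t+3,2t+2\}$, $\{2t+3\}$ has no repetitions across consecutive homological degrees for any $t\ge 1$. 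Both of your proposed arguments for (iii)---the Hilbert-series identity coming from the short exact sequence and the inclusion--exclusion count of standard monomials---go through without difficulty. In short, you have supplied a self-contained proof where the paper defers to the literature; nothing is missing.
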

\begin{proof}
    The first two statements follow immediately from~\cite[Proposition~6.1]{MMN} and the third statement follows from (ii).
\end{proof}

A member of this family, $I_2$, was discussed in~\cite[Example~3.1]{BK-2007} where it was used to answer negatively the question of whether every almost complete
intersection in $S$ has the weak Lefschetz property.  Motivated by this, a larger family containing the $I_t$ is discussed in~\cite[Corollary~7.3]{Br},
\cite[Sections~6 and~7]{MMN}, and~\cite{CN}.  We continue this exploration by considering hyperplane sections of a lift of $I_t$.

We consider the particular lift of $I_t$ in $R = K[w,x,y,z]$ given by $t_{xi} = t_{yi} = t_{zi} = i$ for $0 \leq i \leq t$, that is, the homogeneous ideal
\[
    \oIt := \left(\prod_{i=0}^{t}{(x-iw)}, \prod_{i=0}^{t}{(y-iw)}, \prod_{i=0}^{t}{(z-iw)}, xyz\right) \subset R.
\]

If $2 \leq \charf{K} \leq t$, then $\oIt$ is not a {\em true} lifting of $I_t$, but we will consider it nonetheless.
When $\oIt$ is a true lifting, i.e., $\charf{K} = 0$ or $\charf{K} > t$, then $\oIt$ is the ideal of the level set of points
\[
    \{ [1 : a : b : c] \st 0 \leq a,b,c \leq t \mbox{ and at least one of $a,b,c$ is zero}\} \subset \PP^3_K,
\]
which is in bijection with the standard monomials of $S/I_t$.

Given the lift $\oIt$ of $I_t$, we consider the hyperplanes in $R$ of the form $w+ax$ for $a \in K$.  If $a \in N := \{y \st yi = -1 \mbox{ for some } i \in \{1, 2, \ldots, t\}\},$
then $w+ax$ is a zero-divisor of $R/\oIt$ and so $(\oIt, w+ax)$ is non-Artinian. Suppose $a \not\in N$, then $w+ax$ is a non-zero-divisor of $R/\oIt$ and so the
hyperplane section $(\oIt, w+ax)$ is Artinian.   Further still, $R/(\oIt, w+ax) \cong S/J_{t,a}$ where
\begin{equation} \label{equ:jta}
   J_{t,a} := \left(x^{t+1}, \prod_{i=0}^{t}{(iax + y)}, \prod_{i=0}^{t}{(iax + z)}, xyz\right) \subset S.
\end{equation}
Specifically, $J_{t,0} = I_t$.

We will next analyse the ideals $J_{t,a}$ for the presence of the Lefschetz properties.

\section{The weak Lefschetz property} \label{sec:wlp}

In~\cite[Proposition~2.2]{MMN}, it was shown that $x+y+z$ (and through a similar argument, $x+y-z$) is a weak Lefschetz element of
an Artinian monomial algebra if and only if the algebra has the weak Lefschetz property.  However, $S/J_{t,a}$ is not a monomial algebra
unless $a=0$.  We investigate whether the linear form $\ell := bx+cy-z$ is a weak Lefschetz  element of $S/J_{t,a}$.

Notice that $S/(J_{t,a},\ell) \cong T/L_{t,a}$ where $T = K[x,y]$ and
\begin{equation} \label{equ:lta}
    L_{t,a} := \left(x^{t+1}, \prod_{i=0}^{t}{(iax+y)}, \prod_{i=0}^{t}{((ia+b)x+cy)}, xy(bx+cy) \right) \subset T.
\end{equation}
The second and third generators of $L_{t,a}$ are products of linearly-consecutive binomial terms and can be described using the unsigned Stirling numbers
of the first kind.

The {\em unsigned Stirling numbers of the first kind}, denoted $s_{n,k}$, are defined recursively for $1 \leq k \leq n$ as $s_{n+1,k} = s_{n,k-1} + ns_{n,k}$
with the initial conditions $s_{1,1} = 1$ and $s_{n,0} = 0$ for $n \geq 1$.  In particular, $\prod_{i=0}^{n-1}(x+i) = \sum_{k=1}^n{s_{n,k}x^k}$ (see,
e.g., \cite[Theorem~1.3.4]{St-EC}).

We take the convention that both the empty product and $0^0$ are one.
\begin{lemma} \label{lem:stirling-a}
    Let $0 \neq a, b$ be in $K$ and let $k, n$ be integers with $1 \leq k \leq n$.  Define $V$ to be the set
    of $n$ elements of $K$ of the form $ia+b$ for $0 \leq i < n$.  Then the sum of all products of $k$ subsets in $V$ is
    \[
        d_{n,n-k}(a,b) := \sum_{i=0}^k{s_{n,n-i}\binom{n-i}{k-i}a^ib^{k-i}}.
    \]
\end{lemma}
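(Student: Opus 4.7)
The plan is to recognize the sum in question as the elementary symmetric polynomial $e_k(V)$ of the $n$ elements of $V$, which is by definition the coefficient of $x^{n-k}$ in the generating polynomial
\[
    P(x) := \prod_{v \in V}(x + v) = \prod_{i=0}^{n-1}(x + ia + b).
\]
The entire problem then reduces to computing $P(x)$ explicitly and reading off one coefficient.

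To compute $P(x)$, I would first make the affine change of variable $y := x + b$, which turns $P(x)$ into $\prod_{i=0}^{n-1}(y + ia)$. Since $a \neq 0$, I can factor a copy of $a$ out of each term and apply the Stirling identity $\prod_{i=0}^{n-1}(z + i) = \sum_{k=1}^{n} s_{n,k} z^{k}$ recalled in the paper (with $z = y/a$) to obtain
\[
    \prod_{i=0}^{n-1}(y + ia) = \sum_{\ell=1}^{n} s_{n,\ell}\, a^{n-\ell}\, y^{\ell}.
\]
This reduces $P(x)$ to a polynomial in $y = x + b$ with the dependence on $a$ made completely explicit.

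Substituting $y = x + b$ back into this expression and expanding each $(x+b)^{\ell}$ by the binomial theorem produces a double sum indexed by $\ell$ and some $j \in \{0, \ldots, \ell\}$. Extracting the coefficient of $x^{n-k}$ forces $j = n-k$, and re-indexing via $i := n - \ell$ together with the elementary identity $\binom{n-i}{n-k} = \binom{n-i}{k-i}$ collapses what remains to exactly $d_{n,n-k}(a,b)$. The argument requires no input beyond the Stirling identity already stated in the paper; the main obstacle, to the extent there is one, is purely bookkeeping, in that one must carefully track the two summation indices and their ranges as they get shuffled by the substitution and the re-indexing.
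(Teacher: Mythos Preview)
Your argument is correct. You recognise $e_k(V)$ as the coefficient of $x^{n-k}$ in $\prod_{i=0}^{n-1}(x+ia+b)$, shift to $y=x+b$, apply the Stirling identity $\prod_{i=0}^{n-1}(y+ia)=\sum_{\ell} s_{n,\ell}a^{n-\ell}y^{\ell}$, re-expand $(x+b)^{\ell}$ binomially, and extract the $x^{n-k}$ coefficient; the re-indexing $i=n-\ell$ and the symmetry $\binom{n-i}{n-k}=\binom{n-i}{k-i}$ are both routine, and the edge case $\ell=0$ is harmless since $s_{n,0}=0$.

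This is a genuinely different route from the paper's proof. The paper proceeds by induction on $n$: it checks the base cases directly, observes the standard recursion $e_k(V\cup\{na+b\})=e_k(V)+(na+b)\,e_{k-1}(V)$, and then verifies algebraically that the closed form $d_{n,n-k}(a,b)$ satisfies the same recursion, using Pascal's rule together with the defining recursion $s_{n+1,k}=s_{n,k-1}+ns_{n,k}$ for the Stirling numbers. Your generating-function argument is shorter and more conceptual, since it uses the Stirling identity once rather than unwinding it through its recursion; the paper's inductive approach, on the other hand, is slightly more self-contained in that it never needs the substitution $z=y/a$ (and so never explicitly divides by $a$), working instead purely with the two recursions. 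Either way the content is the same: both proofs ultimately encode the single polynomial identity $\prod_{i=0}^{n-1}(x+ia+b)=\sum_{\ell} s_{n,\ell}a^{n-\ell}(x+b)^{\ell}$.
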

\begin{proof}
    Clearly $d_{1,1}(a,b) = 1$, as $\emptyset$ is the unique subset of size zero.  Further, for all $m \geq 1$,
    \begin{eqnarray*}
        d_{m,0}(a,b) & = & \Pi_{i=0}^{m-1}(ia+b) \\
                     & = & \Sigma_{k=1}^m{s_{m,k}a^{m-k}b^{k}} \\
                     & = & \Sigma_{i=0}^m{s_{m,m-i}\tbinom{m-i}{m-i}a^ib^{k-i}}.
    \end{eqnarray*}

    Let $U$ be the set of $n$ elements of $K$ of the form $ia+b$ for $0 \leq i < n$ and $V = U \cup \{na+b\}$.  Then the sum of all products of $k$ subsets of
    $V$ is the sum of all products of $k$ subsets of $U$ plus the sum of all products of $k-1$ subsets of $U$ scaled by $na+b$.  That is,
    \begin{equation} \label{eqn:stirling-a-rec}
        d_{n+1,(n+1)-k}(a,b) = d_{n,n-k}(a,b) + (na+b)d_{n,n-(k-1)}(a,b).
    \end{equation}

    To proceed by induction, assume the equation on $d_{n,n-k}(a,b)$ is true for all $1 \leq k \leq n$.  Consider then $d_{n+1,(n+1)-k}(a,b)$ with
    $1 \leq k \leq n$.  By Equation~(\ref{eqn:stirling-a-rec}), we have
    \begin{eqnarray*}
        d_{n+1,(n+1)-k}(a,b) & = & d_{n,n-k}(a,b) + (na+b)d_{n,n-(k-1)}(a,b) \\
                             & = & \Sigma_{i=0}^k{s_{n,n-i}\tbinom{n-i}{k-i}a^ib^{k-i}} + (na+b)\Sigma_{i=0}^{k-1}{s_{n,n-i}\tbinom{n-i}{k-1-i}a^ib^{k-1-i}} \\
                             & = & \Sigma_{i=0}^k{s_{n,n-i}\tbinom{n+1-i}{k-i}a^ib^{k-i}} + \Sigma_{i=1}^k{ns_{n,n-(i-1)}\tbinom{n+1-i}{k-i}a^ib^{k-i}} \\
                             & = & \Sigma_{i=0}^k{(s_{n,n-i} + ns_{n,n-(i-1)})\tbinom{n+1-i}{k-i}a^ib^{k-i}} \\
                             & = & \Sigma_{i=0}^k{s_{n+1,(n+1)-i}\tbinom{n+1-i}{k-i}a^ib^{k-i}},
    \end{eqnarray*}
    where we use the properties of binomial coefficients and the unsigned Stirling numbers of the first kind as needed.
\end{proof}

It should be noted that $d_{n, n-k}(1,0) = s_{n,n-k}$ as the set $V$ is then $\{0, 1, \ldots, n-1\}$.

Using Lemma~\ref{lem:stirling-a}, the second generator of $L_{t,a}$ can be described using the unsigned Stirling numbers of the first kind as
\[
    \prod_{i=0}^{t}{(iax+y)} = \sum_{i=0}^{t}s_{t+1,t+1-i}a^{i}x^{i}y^{t+1-i} = \sum_{i=0}^{t}d_{t+1,t+1-i}(a,0)x^iy^{t+1-i}
\]
and the third generator of $L_{t,a}$ can be described using Lemma~\ref{lem:stirling-a} as
\[
    \prod_{i=0}^{t}{((ia+b)x+cy)} = \sum_{i=0}^{t}d_{t+1,t+1-i}(a,b)c^{t+1-i}x^{i}y^{t+1-i}.
\]

Now we return to studying the weak Lefschetz property.  We have explicitly described the coefficients of the generators of
$L_{t,a}$, hence we can use linear algebra to determine whether $S/J_{t,a}$ has $\ell = bx+cy-z$ as a weak Lefschetz element. Define
$N = \{y \st yi = -1 \mbox{ for some } i \in \{1, 2, \ldots, t\}\}$, as above.
\begin{proposition} \label{pro:matrix-Lta}
    Consider the algebra $A = S/J_{t,a}$ as in Equation~(\ref{equ:jta}) and $a \not\in N$.  Let $M_{t,a,b,c}$ be the $(t+1)\times(t+1)$ $K$-matrix given by
    \[
        \left[
        \begin{array}{ccccccc}
            s_{t+1,1}a^t    & s_{t+1,2}a^{t-1}  & s_{t+1,3}a^{t-2}  & \cdots & s_{t+1,t-1}a^2          & s_{t+1,t}a          & 1 \\
            d_{t+1,1}(a,b)c & d_{t+1,2}(a,b)c^2 & d_{t+1,3}(a,b)c^3 & \cdots & d_{t+1,t-1}(a,b)c^{t-1} & d_{t+1,t}(a,b)c^{t} & c^{t+1} \\
            b               & c                 & 0                 & \cdots & 0                       & 0                   & 0 \\
            0               & b                 & c                 & \cdots & 0                       & 0                   & 0 \\
                            &                   &                   & \vdots &                         &                     &   \\
            0               & 0                 & 0                 & \cdots & b                       & c                   & 0
        \end{array}
        \right].
    \]
    Then the algebra $A = S/J_{t,a}$, from Equation~(\ref{equ:jta}), has $\ell = bx+cy-z$ as a weak Lefschetz element if and only if $\det{M_{t,a,b,c}}$ is nonzero in $K$.

    Thus, $\det{M_{t,a,b,c}} \neq 0 \in K$ for some $b, c \in K$ if and only if $A$ has the weak Lefschetz property.
\end{proposition}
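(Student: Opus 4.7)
The plan is to translate the weak Lefschetz property for $\ell = bx+cy-z$ into a linear independence condition for a specific set of $t+2$ vectors in $T_{t+1}$, and then recognize the corresponding coefficient matrix as $M_{t,a,b,c}$.

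First, I would combine the isomorphism $S/(J_{t,a}, \ell) \cong T/L_{t,a}$ with Proposition~\ref{pro:properties-of-It}(iii) to show that $\ell$ is a weak Lefschetz element of $A = S/J_{t,a}$ if and only if $\dim_K T/L_{t,a} = 3t$. For any linear form one has $h_{A/\ell A}(d) \ge \max\{0, h_A(d) - h_A(d-1)\}$ in each degree, with equality precisely when $\times\ell : [A]_{d-1} \to [A]_d$ has maximal rank; summing these lower bounds against the Hilbert function of $S/I_t$ yields exactly $1 + 2 + 3(t-1) = 3t$.

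Assume now that $(b,c) \neq (0,0)$, so $g_4 := xy(bx+cy)$ is a non-zero-divisor in the polynomial ring $T$. The other three generators of $L_{t,a}$ all lie in degree $t+1$, so $[L_{t,a}]_d = T_{d-3} \cdot g_4$ in each degree $3 \le d \le t$, of dimension $d-2$; hence $h_{T/L_{t,a}}$ equals $1, 2, 3, 3, \ldots, 3$ in degrees $0, 1, \ldots, t$, summing to $3t$. Since in $K[x,y]$ the vanishing $[T/L_{t,a}]_{t+1} = 0$ forces $[T/L_{t,a}]_d = 0$ for all $d \ge t+1$, the condition $\dim_K T/L_{t,a} = 3t$ is equivalent to the $t+2$ degree-$(t+1)$ elements
\[
    x^{t+1}, \quad g_2, \quad g_3, \quad x^i y^j g_4 \ \text{for}\ i+j=t-2
\]
being linearly independent in the $(t+2)$-dimensional space $T_{t+1}$.

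Writing the coefficient matrix of these vectors in the basis $x^{t+1}, x^t y, x^{t-1} y^2, \ldots, y^{t+1}$ of $T_{t+1}$, the row for $x^{t+1}$ is $(1, 0, \ldots, 0)$, while the rows of $g_2$ and of each $x^i y^j g_4$ have first entry zero. Using the explicit formulas from Lemma~\ref{lem:stirling-a} for $g_2, g_3$ together with the direct expansion $g_4 = bx^2 y + cxy^2$, cofactor expansion along the first column reduces the $(t+2) \times (t+2)$ determinant to $\det M_{t,a,b,c}$. The case $(b,c) = (0,0)$ is handled separately: $\det M_{t,a,0,0} = 0$ (its bottom rows vanish), and an elementary Hilbert-function computation with $L_{t,a} = (x^{t+1}, \prod_i(iax+y))$ shows that $\dim_K T/L_{t,a} > 3t$, so $\ell = -z$ is not a weak Lefschetz element. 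For the final claim, the locus of weak Lefschetz elements of $A$, if nonempty, is Zariski open in the space of linear forms of $S$, so when $A$ has the WLP we may select such an element with nonzero $z$-coefficient and rescale it into the form $bx+cy-z$. I expect the main obstacle to be the bookkeeping required to match the Stirling-type coefficient formulas from Lemma~\ref{lem:stirling-a} exactly to the entries of $M_{t,a,b,c}$ and to track the ordering conventions in the cofactor expansion.
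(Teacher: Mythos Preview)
Your proposal is correct and follows essentially the same approach as the paper: both reduce the weak Lefschetz condition for $\ell = bx+cy-z$ to the vanishing of $[T/L_{t,a}]_{t+1}$, and then identify this with the nonvanishing of $\det M_{t,a,b,c}$ via the coefficient matrix of $g_2,\, g_3,\, x^iy^jg_4$ modulo $x^{t+1}$. The paper makes the reduction to the single degree $t\to t+1$ by citing \cite[Proposition~2.1]{MMN} for level algebras with the given Hilbert function, whereas you obtain it by directly computing $h_{T/L_{t,a}}$ in degrees $\le t$; your treatment of the edge case $(b,c)=(0,0)$ and of the final ``if and only if'' via the Zariski-open locus of Lefschetz elements is more explicit than the paper's, but the substance is the same.
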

\begin{proof}
    Since $J_{t,a}$ is an Artinian reduction of the lift of $I_t$, then their Hilbert functions are equal.  Thus using Proposition~\ref{pro:properties-of-It}(iii), we
    have then that the Hilbert function of $S/J_{t,a}$ is strictly unimodal from $0$ to $2t$ and has a twin-peak at $t$ and $t+1$; that is,
    $h_{S/J_{t,a}}(t) = 3t = h_{S/J_{t,a}}(t+1)$.  Hence by~\cite[Proposition~2.1]{MMN}, $S/J_{t,a}$ has the weak Lefschetz property if and only if the map
    $[S/J_{t,a}]_t \stackrel{bx+cy-z}{\longrightarrow} [S/J_{t,a}]_{t+1}$ is an isomorphism for some $b,c\in K$ and thus it suffices to check whether
    $[T]_{t+1} \subset L_{t,a}$.

    Thus, the matrix $M_{t,a,b,c}$ corresponds to the system of equations which needs to be solved to determine if a polynomial in $[T]_{t+1}$ with no $x^{t+1}$ term
    is in $L_{t,a}$.  Hence, $\det{M_{t,a,b,c}} \neq 0 \in K$ if and only if $M_{t,a,b,c}$ is invertible in $K$, i.e., $[T]_{t+1} \subset L_{t,a}$.
\end{proof}

Furthermore, we give a closed form for the determinant of $M_{t,a,b,c}$.
\begin{proposition} \label{pro:det-Mta}
    Assuming $b,c$ both nonzero, then the determinant of $M_{t,a,b,c}$ is
    \[
        (-1)^tc^t\left( \prod_{i=1}^{t}(ai+b) - \prod_{i=1}^{t}(aci-b) \right).
    \]
\end{proposition}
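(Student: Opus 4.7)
My plan is to evaluate $\det M_{t,a,b,c}$ by Laplace-expanding along the last $t-1$ rows, which are bidiagonal with only $b$ and $c$ as nonzero entries, and to identify the resulting sum with the claimed difference of two products using Lemma~\ref{lem:stirling-a}.

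For the Laplace expansion I first catalogue the valid column selections.  Row $k$ (for $k=3,\ldots,t+1$) has its $b$ in column $k-2$ and its $c$ in column $k-1$; because row $k$'s $c$-column coincides with row $(k+1)$'s $b$-column, a nonzero term in the expansion requires a single threshold $p\in\{1,\ldots,t\}$ so that rows $3,\ldots,p+1$ contribute $b$ and rows $p+2,\ldots,t+1$ contribute $c$.  The columns consumed by the bottom rows are then $\{1,\ldots,t\}\setminus\{p\}$, leaving columns $p$ and $t+1$ for rows~1 and~2.  After a careful count of inversions for the two ways of matching the top two rows to these two columns, the expansion collapses to
\[
\det M_{t,a,b,c} \;=\; \sum_{p=1}^{t} (-1)^{p+t}\, b^{p-1}\, c^{t-p}\bigl(s_{t+1,p}\, a^{t+1-p}\, c^{t+1} \,-\, d_{t+1,p}(a,b)\, c^{p}\bigr).
\]

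Factoring $(-1)^t c^t$ out, the right-hand side takes the form $(-1)^t c^t (cS_1 - S_2)$ where
\[
cS_1 \;=\; \sum_{p=1}^{t} (-1)^p\, s_{t+1,p}\, a^{t+1-p}\, c^{t+1-p}\, b^{p-1} \quad \text{and} \quad S_2 \;=\; \sum_{p=1}^{t} (-1)^p\, d_{t+1,p}(a,b)\, b^{p-1}.
\]
Lemma~\ref{lem:stirling-a} applied with parameters $(ac,-b)$ gives the closed form $\prod_{i=0}^{t}(aci-b) = \sum_{k=1}^{t+1} s_{t+1,k}\,(ac)^{t+1-k}(-b)^{k}$; pulling out the $i=0$ factor $-b$ on the left and separating the $k=t+1$ term on the right then yields $cS_1 = (-1)^t b^t - \prod_{i=1}^{t}(aci-b)$.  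For $S_2$ the analogous identification hinges on the observation that $b$ itself is a member of the multiset $V = \{b, a+b, \ldots, ta+b\}$, so $\prod_{v\in V}(x-v)$ vanishes at $x=b$; rewriting this vanishing via the identification $d_{t+1,\,t+1-k}(a,b) = e_k(V)$ produces $\sum_{p=0}^{t+1} (-1)^p b^p\, d_{t+1,p}(a,b) = 0$, and dividing by $b$ and separating the $p=0$ and $p=t+1$ terms yields $S_2 = (-1)^t b^t - \prod_{i=1}^{t}(ai+b)$.

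Substituting both expressions back, the $(-1)^t b^t$ terms cancel and the claimed formula drops out.  The main bookkeeping obstacle is the sign and inversion count in the Laplace step; the other delicate point is the use of the vanishing of $\prod_{v\in V}(x-v)$ at $x=b$ to identify $S_2$, since this is what connects the elementary symmetric functions $d_{t+1,p}(a,b)$ to the single product $\prod_{i=1}^{t}(ai+b)$.
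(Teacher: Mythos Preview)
Your proof is correct and the formula drops out exactly as you claim; the Laplace expansion along the bidiagonal block, the identification of the unique column pattern for each threshold $p$, the sign $(-1)^{p+t}$, and the evaluations of $cS_1$ and $S_2$ all check out.

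The route, however, differs from the paper's in two respects. First, the paper reaches the same intermediate sum by Gaussian elimination on the last $t-1$ rows rather than by a Laplace expansion; the resulting expression
\[
b^{t-1}c^t\sum_{j=1}^t\Bigl( (-c/b)^{t-j}\,c\,a^{t+1-j}s_{t+1,j} - (-1/b)^{t-j}d_{t+1,j}(a,b) \Bigr)
\]
is exactly your sum after clearing denominators. Second, and more substantively, the paper then expands each $d_{t+1,j}(a,b)$ via Lemma~\ref{lem:stirling-a}, interchanges the order of summation, and collapses a double sum in Stirling numbers and binomial coefficients to reach the two products. Your treatment of $S_2$ is cleaner: the observation that $b$ itself belongs to $V=\{b,a+b,\ldots,ta+b\}$, so that $\prod_{v\in V}(x-v)$ vanishes at $x=b$, gives the relation $\sum_{p}(-1)^p b^p d_{t+1,p}(a,b)=0$ in one stroke and bypasses the binomial-coefficient bookkeeping entirely. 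The trade-off is that your approach requires a careful inversion count in the Laplace step, whereas the paper's row reduction is more mechanical; overall the two arguments are of comparable length.
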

\begin{proof}
    By straightforward Gaussian elimination, we get
    \begin{eqnarray*}
        \det{M_{t,a,b,c}} & = & b^{t-1}c^t\sum_{j=1}^t\left( \left(-\frac{c}{b}\right)^{t-j}ca^{t+1-j}s_{t+1,j} - \left(-\frac{1}{b}\right)^{t-j}d_{t+1,j}(a,b) \right) \\
                          & = & b^{t-1}c^t\left( \sum_{j=1}^{t}\left(-\frac{c}{b}\right)^{t-j}ca^{t+1-j}s_{t+1,j} \right.\\
                          &   & \left.  - \sum_{k=1}^{t+1}a^{t+1-k}s_{t+1,k}\sum_{j=1}^{t}\left(-\frac{1}{b}\right)^{t-j}b^{k-j}\binom{k}{k-j} \right) \\
                          & = & (-1)^tc^t\sum_{j=1}^{t+1}a^{t+1-j}b^{j-1}s_{t+1,j}\left( (-1)^jc^{t+1-j}+1 \right) \\
                          & = & (-1)^tc^t\left( \sum_{j=1}^{t+1}a^{t+1-j}b^{j-1}s_{t+1,j} + \sum_{j=1}^{t+1}(-1)^j a^{t+1-j}b^{j-1}c^{t+1-j}s_{t+1,j} \right) \\
                          & = & (-1)^tc^t\left( \prod_{i=1}^{t}(ai+b) - \prod_{i=1}^{t}(aci-b) \right).
    \end{eqnarray*}
\end{proof}

Given the above determinant calculation, we make the following
observations.
\begin{remark}\label{rem:specific-dets}
    If we specialise the parameters $a, b,$ and $c$ suitably, then we get three nice determinant evaluations.
    \begin{enumerate}
        \item {\em When $a=0$}:  Then $J_{t,0} = I_t$ and $\det{M_{t,0,b,c}} = b^tc^t((-1)^t-1)$.  Hence $S/J_{t,0}$ has the weak Lefschetz property
            if and only if $t$ is odd and $\charf{K} \neq 2$, recovering~\cite[Proposition~3.1]{CN}.
        \item {\em When $b = c = 1$}:  Then
            \[
                \det{M_{t,a,1,1}} =
                \left\{ \begin{array}{ll}
                        2\sum_{i=1}^{t/2}a^{2i-1}s_{t+1,t+1-(2i-1)} & \mbox{if $t$ is even;} \\
                        -2(1+\sum_{i=1}^{\floor{t/2}}a^{2i}s_{t+1,t+1-2i}) & \mbox{if $t$ is odd.}
                \end{array} \right.
            \]
            Thus in characteristic zero, $x+y-z$ is a weak Lefschetz  element of $S/J_{t,a}$ if $a \neq 0$ and $a \not\in N$.
        \item {\em When $a = b = c = 1$}: Then $\det{M_{t,1,1,1}} = (-1)^t(t+1)!$.  Hence $x+y-z$ is a weak Lefschetz element of $S/J_{t,1}$ if and only if $\charf{K} = 0$ or
            $\charf{K} > t$, i.e., $\oIt$ is a true lifting of $I_t$.
    \end{enumerate}
\end{remark}

\begin{theorem} \label{thm:Jta-wlp}
    Let $K$ be any infinite field and $S = K[x,y,z]$.  Then for all $a$ in $K$ such that $a \neq 0$ and $a\not\in N$ and for all positive integers $t$, the algebra
    $A = S/J_{t,a}$ has the weak Lefschetz property.
\end{theorem}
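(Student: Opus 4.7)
My plan is to apply Proposition~\ref{pro:matrix-Lta} by producing explicit $b, c \in K$, both nonzero, such that $\det M_{t,a,b,c} \neq 0$. By Proposition~\ref{pro:det-Mta}, for nonzero $b,c$ this determinant equals
\[
    (-1)^t c^t \left(\prod_{i=1}^t (ai+b) - \prod_{i=1}^t (aci - b)\right),
\]
so the task reduces to finding $b, c \in K^{*}$ making this expression nonzero in $K$.

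The key step will be to choose $b = -a$, which forces the first product to collapse. Indeed, at $b = -a$ one has $\prod_{i=1}^t (ai + b) = a^t \prod_{i=1}^t (i - 1) = 0$ thanks to the factor at $i = 1$. The second product becomes $\prod_{i=1}^t(aci + a) = a^t \prod_{i=1}^t (ci + 1)$, so the determinant simplifies to
\[
    \det M_{t,a,-a,c} = (-1)^{t+1} a^t c^t \prod_{i=1}^t (ci+1).
\]

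It then remains to pick $c \in K^{*}$ such that each factor $ci + 1$ is nonzero in $K$. Such a factor vanishes precisely when $c = -1/i$ for some $i \in \{1, \ldots, t\}$, that is, precisely when $c \in N$. Since $N \cup \{0\}$ has at most $t+1$ elements and $K$ is infinite, one can choose $c \in K^{*} \setminus N$. For such $c$ the determinant is nonzero, and Proposition~\ref{pro:matrix-Lta} then guarantees that $\ell = -ax + cy - z$ is a weak Lefschetz element of $S/J_{t,a}$, so the algebra has the weak Lefschetz property.

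I do not anticipate a significant obstacle: the whole argument rests on the single substitution $b = -a$, which zeros out the first product via the vacuous factor at $i = 1$, after which the residual condition on $c$ is exactly $c \notin N$, achievable since $K$ is infinite. The hypothesis $a \notin N$ does not enter this computation directly; it was imposed upstream to ensure that $S/J_{t,a}$ is Artinian in the first place, which is what allows Proposition~\ref{pro:matrix-Lta} to be invoked.
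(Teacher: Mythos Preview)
Your proof is correct and follows essentially the same approach as the paper: use Proposition~\ref{pro:det-Mta} and choose $b$ so that one of the two products in the determinant formula vanishes via the factor at $i=1$, then use the infiniteness of $K$ to pick $c$ avoiding the finitely many bad values. The only cosmetic difference is that the paper sets $b = ac$ to kill the \emph{second} product $\prod_{i=1}^t(aci-b) = (ac)^t\prod_{i=1}^t(i-1) = 0$, whereas you set $b = -a$ to kill the \emph{first} product; the resulting avoidance condition on $c$ is the same up to sign (the paper needs $c \neq -i$, you need $c \neq -1/i$), and both are handled identically.
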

\begin{proof}
    Let $a, c$ be nonzero elements of $K$.  Then $b = ac$ is nonzero and moreover
    \[
        \det{M_{t,a,b,c}} = (-1)^tc^t\prod_{i=1}^{t}(ai+b) = (-1)^ta^tc^t\prod_{i=1}^{t}(i+c) \subset K[x, y, z].
    \]
    As $K$ is infinite, there exists a nonzero $c$ in $K$ such that $i+c \neq 0$ for $1 \leq i \leq t$.  Hence,
    $\det{M_{t,a,b,c}}$ is nonzero in $K$.  Therefore, if $a \not\in N$, then $A$ has the weak Lefschetz property.
\end{proof}

We partially summarise our results as follows:

\begin{corollary} \label{cor:general-has-WLP}
    Let $t \ge 1$ be an integer and set $A = R/\oIt$, where
    \[
        \oIt := \left(\prod_{i=0}^{t}{(x-iw)}, \prod_{i=0}^{t}{(y-iw)}, \prod_{i=0}^{t}{(z-iw)}, xyz\right) \subset R = K[w,x,y,z].
    \]
    Then:
    \begin{enumerate}
        \item If the characteristic of $K$ is zero or  greater than $t$, then the ideal $\oIt$ defines a set of $3 (t+1) t +1$  points
            in $\PP^3$ that is level of type three.
        \item If $\ell \in [R]_1$ is a general linear form, then $A/\ell A$ has the weak Lefschetz property.
        \item If $\ell = w$, then the Artinian algebra $A/\ell A$ has the weak Lefschetz property if and only if $t$ is odd and $\charf K \neq 2$.
    \end{enumerate}
\end{corollary}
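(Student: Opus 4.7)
The plan is to verify the three parts in turn, each time assembling results already established in the paper.

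For (i), the hypothesis $\charf K = 0$ or $\charf K > t$ ensures the scalars $t_{ji}=i$, $0 \le i \le t$, are distinct in $K$, so the lifting recipe of Section~\ref{sec:liftings} applied to $I_t$ produces a true lifting. As recorded just after the definition of $\oIt$, the associated point set in $\PP^3_K$ is in bijection with the standard monomials of $S/I_t$ --- the triples $(a,b,c) \in \{0,\ldots,t\}^3$ having at least one zero coordinate --- giving the count $(t+1)^3 - t^3 = 3(t+1)t + 1$. Levelness of type three is inherited from $I_t$ by the remark after Theorem~\ref{thm:monomial-liftable} together with the socle-type computation in Proposition~\ref{pro:properties-of-It}(ii).

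For (ii), I would show that
\[
    W := \{\ell \in [R]_1 : \ell \text{ is a non-zerodivisor on } A \text{ and } A/\ell A \text{ has WLP}\}
\]
is a nonempty Zariski-open subset of $[R]_1$, so that a general linear form lies in $W$ by definition. Nonemptiness is immediate from Theorem~\ref{thm:Jta-wlp}: since $N$ is finite and $K$ infinite, pick $a \in K$ with $a \neq 0$ and $a \notin N$; then $\ell := w+ax$ is a non-zerodivisor on $A$ and $A/\ell A \cong S/J_{t,a}$ has WLP. For openness, restrict first to the open locus where $\ell$ is regular on $A$; there the Hilbert function of $A/\ell A$ equals that of $S/I_t$, in particular exhibiting the twin peak $h(t) = 3t = h(t+1)$ from Proposition~\ref{pro:properties-of-It}(iii). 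Consequently WLP reduces to the existence of some $\ell' \in [R]_1$ making the bilinear map $[A]_t \oplus [A]_t \to [A]_{t+1}$, $(u,v) \mapsto \ell' u + \ell v$, surjective; this rank-maximality condition carves out an open subset of $[R]_1 \times [R]_1$, whose image $W$ under first-factor projection (a smooth, hence open, morphism) is open in $[R]_1$.

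For (iii), $A/wA \cong R/(\oIt,w) \cong S/J_{t,0} = S/I_t$ is a monomial algebra. By \cite[Proposition~2.2]{MMN} invoked at the start of Section~\ref{sec:wlp}, for any Artinian monomial algebra $x+y-z$ is a weak Lefschetz element if and only if the algebra has WLP; combining this with Proposition~\ref{pro:matrix-Lta} gives the equivalence with $\det M_{t,0,1,1} \neq 0$ in $K$. Remark~\ref{rem:specific-dets}(i) computes $\det M_{t,0,1,1} = (-1)^t - 1$, which is nonzero in $K$ precisely when $t$ is odd and $\charf K \neq 2$.

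The main obstacle I anticipate is the openness step of (ii): the strategy (smooth projection of an open rank-locus) is standard, but one must set up the determinantal matrix for the multiplication map on the quotient $A/\ell A$ globally as $\ell$ varies. The reformulation sketched above --- replacing surjectivity of $\times \ell'$ on the quotient by surjectivity of $(u,v) \mapsto \ell' u + \ell v$ on the un-quotiented graded pieces of $A$, where the matrix entries are manifestly bilinear in $(\ell,\ell')$ --- should handle this cleanly, after which the proof is a direct compilation of the results of Sections~\ref{sec:liftings} and~\ref{sec:wlp}.
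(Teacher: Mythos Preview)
Your proof is correct and, for parts (i) and (iii), matches the paper's argument essentially verbatim. For (ii) you reach the same conclusion by the same underlying mechanism (semicontinuity/openness applied to a known good specialisation from Theorem~\ref{thm:Jta-wlp}), but you package it differently: you reduce WLP to the single peak map $[A/\ell A]_t \to [A/\ell A]_{t+1}$ via \cite[Proposition~2.1]{MMN}, reformulate that as surjectivity of $(u,v)\mapsto \ell' u + \ell v$ on fixed graded pieces of $A$, and then project an open rank locus in $[R]_1\times[R]_1$ down to $[R]_1$. The paper instead compares $\dim_K[A/(\ell,L)A]_j$ for general $(\ell,L)$ against the specific $(\ell',L)$ simultaneously for \emph{all} $j$, which sidesteps the reduction to the peak and hence any appeal to levelness of $A/\ell A$ that \cite[Proposition~2.1]{MMN}(b) would require. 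Your route is more explicit about why the WLP locus is open; the paper's is shorter and degree-uniform. Either way the essential input is identical: nonemptiness from Theorem~\ref{thm:Jta-wlp} plus upper semicontinuity.
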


\begin{proof}
    Claim (i) follows by Theorem \ref{thm:monomial-liftable} and Proposition \ref{pro:properties-of-It}.

    In order to prove (ii), notice that Theorem \ref{thm:Jta-wlp} shows that $A/\ell' A$ has the weak Lefschetz property, where $\ell' = w +
    a x$ and $a \neq 0$ is any element in $K \setminus N$. Let $L \in R$ be
    another general linear form. Then, for all $j \in \ZZ$, one has
    \[
        \dim_K [A/(\ell, L)A]_j \le \dim_K [A/(\ell', L)A]_j
    \]
    Since $A/\ell'A$ has the weak Lefschetz property, this must be an equality, hence $A/\ell A$ also has the weak Lefschetz property.

    Part (iii) has been shown in Remark \ref{rem:specific-dets}(i).
\end{proof}

Specialising to $t = 2$, we get an example reminiscent of~\cite[Example~3.1]{BK-2007}, which showed that, in characteristic zero, for any degree three form $f$, the ideal
$(x^3, y^3, z^3, f)$ has the weak Lefschetz property if and only if $f \neq xyz$ modulo $x^3, y^3, z^3$.
\begin{example} \label{exa:t-2}
    Let $t = 2$, $\charf{K} \neq 2$, and $a \in K \setminus \{-1, -\frac{1}{2}\}$, then
    \[
        J_{t,a} = (x^3, y(ax+y)(2ax+y), z(ax+z)(2ax+z), xyz)
    \]
    is Artinian.  Moreover, $S/J_{t,a}$ has the weak Lefschetz property if and only if $a \neq 0$, that is, $J_{t,a}$ is non-monomial.
\end{example}

\section{The strong Lefschetz property} \label{sec:slp}

In case, $a = 0$, where $S/J_{t,a}$ is a monomial algebra, the strong Lefschetz property fails spectacularly.
\begin{proposition} \label{pro:slp-a-0}
    The algebra $S/J_{t,0} = S/I_t$ has the strong Lefschetz property if and only if $t = 1$ and $\charf{K} \neq 2$.
\end{proposition}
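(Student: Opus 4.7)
The plan is to handle the two directions separately. For the ``if'' direction, I would take $t = 1$ and $\charf K \neq 2$ and verify that $\ell = x+y+z$ is a strong Lefschetz element of $A = S/I_1$. Since the Hilbert function of $A$ is $1, 3, 3$ and Remark~\ref{rem:specific-dets}(i) already shows that multiplication by $\ell$ has maximal rank in every degree, the only remaining check is $\times \ell^2 : [A]_0 \to [A]_2$, and a direct expansion modulo $I_1 = (x^2, y^2, z^2, xyz)$ gives $\ell^2 \equiv 2(xy + xz + yz)$, which is nonzero precisely because $\charf K \neq 2$.

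For the converse, if $t$ is even or $\charf K = 2$ then the weak Lefschetz property already fails by Remark~\ref{rem:specific-dets}(i), and so does the strong Lefschetz property. The nontrivial range is $t \geq 3$ odd with $\charf K \neq 2$, where the weak Lefschetz property does hold but I must show no linear form in $A$ is a strong Lefschetz element. I would first observe that the group of graded automorphisms of $A$ is $(K^*)^3 \rtimes S_3$, acting by scaling each variable and permuting them; indeed, a linear substitution $\varphi$ with $\varphi(I_t) \subseteq I_t$ must send $x$ to a scalar multiple of a single variable, since the cross-terms $x^i y^j$ with $i, j \geq 1$ appearing in $\varphi(x)^{t+1}$ cannot be killed inside $I_t$. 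This reduces the task to three orbit representatives: $\ell = x$, $\ell = x+y$, and $\ell = x+y+z$.

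The first two cases are quick. For $\ell = x$, the map $\times \ell^{t+1} : [A]_0 \to [A]_{t+1}$ is zero while the target has dimension $3t > 0$, violating maximal rank. For $\ell = x + y$, I would expand $\ell^{2t-1} \cdot z = z(x+y)^{2t-1}$ and note that every summand vanishes in $A$: if both the $x$- and $y$-exponents are positive the monomial is a multiple of $xyz$, and otherwise it is $x^{2t-1}z$ or $y^{2t-1}z$, which vanishes because $2t-1 > t$ for $t \geq 2$. Hence $z$ lies in the kernel of a map $[A]_1 \to [A]_{2t}$ between $3$-dimensional spaces that would have to be an isomorphism.

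The main obstacle is the final orbit $\ell = x + y + z$. Here I would study $\times \ell^{2t-3} : [A]_2 \to [A]_{2t-1}$, a map between two $6$-dimensional spaces with the natural monomial bases. Expanding $\ell^{2t-3} \cdot m$ for each $m \in \{x^2, y^2, z^2, xy, xz, yz\}$ and reducing modulo $I_t$, the relation $xyz = 0$ together with the pure-power relations leaves only two multinomial coefficients alive, namely $\alpha = \binom{2t-3}{t}$ and $\beta = \binom{2t-3}{t-1}$. A short computation using the resulting $S_3$-symmetric $6 \times 6$ matrix then shows that $\beta(x^2 + y^2 + z^2) - (\alpha + \beta)(xy + xz + yz)$ always lies in its kernel, and this element is nonzero unless $\alpha$ and $\beta$ both vanish in $K$, in which case the map itself is zero. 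I expect this $6 \times 6$ kernel analysis, together with the verification that the nontriviality of the kernel persists in every positive characteristic, to be the most delicate step.
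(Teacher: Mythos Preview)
Your argument is correct, and the heart of it---the kernel element for $\ell = x+y+z$ in the map $[A]_2 \to [A]_{2t-1}$---coincides exactly with what the paper does: the paper picks $p,q$ not both zero with $p(2t-2)+qt=0$ and shows $f = p(x^2+y^2+z^2)+q(xy+xz+yz)$ is annihilated by $\ell^{2t-3}$, which is your element after clearing the common factor (your relation $t\alpha = (t-2)\beta$ is equivalent to theirs).

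The genuine difference is in the reduction step. The paper invokes \cite[Proposition~2.2]{MMN}, which says that for an Artinian \emph{monomial} algebra the single form $\ell = x+y-z$ is a strong Lefschetz element if and only if the algebra has the strong Lefschetz property; this collapses the whole question to one $\ell$ with no further case analysis. You instead recover this reduction by hand: you observe that $(K^*)^3 \rtimes S_3$ acts by graded automorphisms on $S/I_t$ and has exactly three orbits on nonzero linear forms, represented by $x$, $x+y$, and $x+y+z$, and you dispose of the first two directly. This is more elementary and self-contained---it avoids importing the external result---at the cost of two extra (easy) cases. One small caution: your parenthetical assertion that $(K^*)^3 \rtimes S_3$ is \emph{the} full automorphism group is stronger than what you actually use and need not be checked; all your argument requires is that this subgroup already acts transitively on linear forms with a fixed support pattern, which is immediate.
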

\begin{proof}
    Let $\ell = x+y-z$, then by~\cite[Proposition~2.2]{MMN}, $\ell$ is a strong Lefschetz element of $S/J_{t,0}$ if and only if $S/J_{t,0}$ has the strong
    Lefschetz property.

    If $t$ is even or $\charf{K} = 2$ then, by Proposition~\ref{pro:matrix-Lta}, $S/J_{t,0}$ fails to have the weak Lefschetz property hence fails to have the
    strong Lefschetz property.

    Suppose then $t$ is odd and $\charf{K} \neq 2$.  If $t = 1$, then, by Proposition~\ref{pro:matrix-Lta}, $S/J_{1,0}$ has the weak Lefschetz property.
    As the regularity of $S/J_{1,0}$ is two and the map $[S/J_{1,0}]_0 \stackrel{\ell^2}{\longrightarrow} [S/J_{1,0}]_2$ is injective since
    $\ell^2 \not\in J_{1,0}$, then $S/J_{1,0}$ has the strong Lefschetz property.

    Suppose $t\geq3$ and let $A = S/J_{t,0}$.  As $\dim_K{[A]_2} = 6 = \dim_K{[A]_{2t-1}}$ by Proposition~\ref{pro:properties-of-It}(iii), then it suffices to show
    the map $\varphi: [A]_{2} \stackrel{\ell^{2t-3}}{\longrightarrow} [A]_{2t-1}$ is not injective.
    Let $p,q \in K$, not both zero, such that $p(2t-2)+qt=0$; such a non-trivial solution exists in $K$ regardless of characteristic.  Consider then
    $f = p(x^2+y^2+z^2)+q(xy+xz+yz)$ which is a nonzero element of $[A]_2$.  As $\ell^{2t-3}f$ is equivalent to
    $(p(2t-2)+qt)(x^ty^{t-1}+x^{t-1}y^t+x^tz^{t-1}-x^{t-1}z^t+x^tz^{t-1}-x^{t-1}z^t)$ modulo $I_t$, then $\ell^{2t-3}f$ is in $I_t$.  Hence, $\varphi$
    is not injective and thus $A$ fails to have the strong Lefschetz property.
\end{proof}

Now we consider the case when the Artinian algebra  $S/J_{t,a}$ is
{\em not} a monomial algebra.
\begin{remark} \label{rem:slp-a-nz}
    Suppose $K$ is a field of characteristic zero.  Let $a \not\in N$ be a nonzero element of $K$ and let $A = S/J_{t,a}$.  As the Hilbert function of
    $A$ is symmetric from $1$ to $2t$ with peak $t,t+1$ and $A$ is level by Proposition~\ref{pro:properties-of-It}, then using~\cite[Proposition~2.1]{MMN} it suffices to show
    for $1 \leq i \leq t$ the following hold:
    \begin{enumerate}
        \item $[A]_{t-i+1} \stackrel{\ell^{2i-1}}{\longrightarrow} [A]_{t+i}$ is an isomorphism,
        \item $[A]_{t-i} \stackrel{\ell^{2i}}{\longrightarrow} [A]_{t+i}$ is an injection, and
        \item $[A]_{t-i+1} \stackrel{\ell^{2i}}{\longrightarrow} [A]_{t+i+1}$ is a surjection.
    \end{enumerate}

    As $A$ has the weak Lefschetz property, if part~(i) is shown for all $i$, then parts~(ii) and~(iii) follow immediately:
    \begin{enumerate}
        \item[(ii)] $[A]_{t-i} \stackrel{\ell^{2i}}{\rightarrow} [A]_{t+i} = [A]_{t-i} \stackrel{\ell}{\rightarrow} [A]_{t-i+1} \stackrel{\ell^{2i-1}}{\rightarrow} [A]_{t+i}$
            is a composition of injective maps, hence injective.
        \item[(iii)] $[A]_{t-i+1} \stackrel{\ell^{2i}}{\rightarrow} [A]_{t+i+1} = [A]_{t-i+1} \stackrel{\ell^{2i-1}}{\rightarrow} [A]_{t+i} \stackrel{\ell}{\rightarrow} [A]_{t+i+1}$
            is a composition of surjective maps, hence surjective.
    \end{enumerate}

    Thus, in order to show $A$ has the strong Lefschetz property it is sufficient to show part~(i) holds for $2 \leq i \leq t$.
\end{remark}

Using~\cite{M2}, we have verified that in characteristic zero, if $t \le 30$, then there exists some $a \in K$ such that $\ell$ is a
strong Lefschetz element of $S/J_{t,a}$.  We suspect that this is always true.
\begin{conjecture} \label{con:slp-a-nz}
    Suppose $K$ is a field of characteristic zero.  Let $a \not\in N$ be a nonzero element of $K$ and let $A = S/J_{t,a}$.
    Then $A$ has the strong Lefschetz property with strong Lefschetz element $\ell = x+y-z$.
\end{conjecture}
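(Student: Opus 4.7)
The plan is to follow the template set up in Section~\ref{sec:wlp}: reduce the conjecture to the non-vanishing of a sequence of explicit determinants and then try to compute those determinants in closed form, as was done in Proposition~\ref{pro:det-Mta}. By Remark~\ref{rem:slp-a-nz}, it suffices to verify that for each $2 \le i \le t$ the multiplication map
\[
    \varphi_i : [A]_{t-i+1} \stackrel{\ell^{2i-1}}{\longrightarrow} [A]_{t+i}
\]
is an isomorphism, and by Proposition~\ref{pro:properties-of-It}(iii) both source and target have $K$-dimension $3(t-i+1)$, so it is equivalent to check invertibility of a single square matrix $M_{t,a,i}$ representing $\varphi_i$. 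A natural choice of bases is the image in $A$ of the standard monomials of $S/I_t$; since $S/J_{t,a}$ and $S/I_t$ share a Hilbert function, these sets do span in each degree. Expanding $(x+y-z)^{2i-1}$ by the multinomial theorem and reducing modulo the three non-monomial generators of $J_{t,a}$---the reductions being controlled by Lemma~\ref{lem:stirling-a}---expresses every entry of $M_{t,a,i}$ as a polynomial in $a$ with integer coefficients, so that $\Delta_{t,i}(a) := \det M_{t,a,i}$ lies in $\ZZ[a]$.

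The heart of the argument is to show that $\Delta_{t,i}(a)$ has no zeros outside $\{0\} \cup N$. Two complementary tactics look promising. The first is to aim directly for a product expression for $\Delta_{t,i}(a)$ analogous to Proposition~\ref{pro:det-Mta}, perhaps via an induction on $i$ that factors $\ell^{2i-1} = \ell \cdot \ell^{2i-2}$ and leverages the WLP of $A$ established in Theorem~\ref{thm:Jta-wlp}. The second is more geometric: seek degenerations of the underlying point set $\oIt$ that force failure of $\varphi_i$ precisely at $a = 0$ (when the points collapse onto the coordinate axes) and at $a \in N$ (when $w + ax$ becomes a zero-divisor), and then match these with a sharp upper bound on the degree of $\Delta_{t,i}$ in $a$. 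Together these would pin down the zero locus of $\Delta_{t,i}$, and the Macaulay2 verification for $t \le 30$ cited after the conjecture is strong evidence that such a factorisation exists.

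The main obstacle, and the reason the statement is still a conjecture, is producing that closed form. The Gaussian elimination that cleanly yielded Proposition~\ref{pro:det-Mta} for $i = 1$ exploited a nearly bidiagonal block in $M_{t,a,b,c}$, whereas for $i \ge 2$ the matrices have a more intricate structure coming from the $(2i-1)$-fold multiplication by $\ell$, and it is not obvious that any single row-reduction routine simplifies them uniformly in $i$. A more modest fallback would be to prove only that $\Delta_{t,i}(a)$ is not identically zero---for instance by evaluating at $a = 1$, where $\oIt$ is a true lifting and direct computation is tractable---which already yields the SLP for $a$ in a dense open subset of $K$, and then to extend to the full set $K \setminus (\{0\} \cup N)$ by a specialisation argument covering the finitely many remaining values.
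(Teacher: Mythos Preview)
The statement you are addressing is labelled a \emph{Conjecture} in the paper, and the paper offers no proof of it; the only support given is the Macaulay2 verification for $t \le 30$ together with Remark~\ref{rem:slp-a-nz}, which reduces the question to the bijectivity of the maps $\varphi_i$ for $2 \le i \le t$. So there is no ``paper's own proof'' to compare against, and your write-up is candid about this: it is a strategy, not a proof, and you correctly isolate the central obstruction, namely the absence of a closed-form evaluation of $\Delta_{t,i}(a)$ for $i \ge 2$ analogous to Proposition~\ref{pro:det-Mta}. Neither of your two ``complementary tactics'' is carried out, and you do not indicate a concrete mechanism by which the induction on $i$ or the degeneration argument would actually produce the needed factorisation, so as it stands the proposal does not close the gap.

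Two small remarks on the plan itself. First, the claim that the standard monomials of $S/I_t$ furnish a $K$-basis of $S/J_{t,a}$ does not follow merely from equality of Hilbert functions; what is needed is that, under a term order with $y,z \succ x$, the initial ideal of $J_{t,a}$ is exactly $I_t$, which is true here but deserves a sentence of justification. Second, note that the paper's computational evidence only asserts the existence of \emph{some} $a$ for which $\ell$ is a strong Lefschetz element---essentially your ``fallback'' statement about generic $a$---whereas the full conjecture demands this for \emph{every} nonzero $a \notin N$. Your proposed root-counting argument (matching the degree of $\Delta_{t,i}$ in $a$ against the forced zeros at $a=0$ and $a \in N$) would therefore be doing genuinely new work beyond what has been checked, and it is not at all clear that the zeros at $a \in N$ actually occur or occur with the right multiplicities, since for $a \in N$ the algebra $S/J_{t,a}$ is not even Artinian and the interpretation of $\Delta_{t,i}$ there is formal.
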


Thus, if the conjecture holds, then, at least in characteristic zero, there is only one ``bad'' choice for the strong Lefschetz property and it is,
interesting in its own right, the only monomial case.


\end{document}